\title {A basis of the group of \\ primitive almost pythagorean triples}
\author{Nikolai A. Krylov\\ ~ \\
Siena college, department of mathematics\\
515 Loudon Road, Loudonville NY 12211\\ ~ \\
nkrylov@siena.edu}
\date {}
\begin{document}

\newtheorem{thm}{Theorem}
\newtheorem{lem}{Lemma}
\newtheorem{claim}{Claim}
\newtheorem{dfn}{Definition}
\newtheorem{prop}{Proposition}
\newtheorem{example}{Example}
\newtheorem{note}{Note}

\def\natu       {\mathbb N}
\def\inte       {\mathbb Z}
\def\rati       {\mathbb Q}
\def\real       {\mathbb R}
\def\comp       {\mathbb C}
\def\field      {\mathbb F}
\def\P          {{\cal P}}
\def\rP		{{\mathbb P}}
\def\F          {{\cal F}}
\def\A          {{\cal A}}
\def\C          {{\it Cl}}
\def\O          {{\cal O}}
\def\GCD        {{\rm gcd}}
\def\T		{{\cal T}}

\def\lla        {\longleftarrow}
\def\lra        {\longrightarrow}
\def\ra         {\rightarrow}
\def\hra        {\hookrightarrow}
\def\lmt        {\longmapsto}
\def\lam        {\lambda}
\def\del        {\delta}
\def\eps        {\epsilon}

\maketitle

\parskip=3mm

\begin{abstract}
Let $m$ be a fixed square-free positive integer, then equivalence classes of solutions of
Diophantine equation $x^2+m\cdot y^2=z^2$ form an infinitely
generated abelian group under the operation induced by the complex
multiplication. A basis of this group is constructed here using 
prime ideals and the ideal class group of the field $\mathbb Q (\sqrt{-m})$.
\end{abstract}

\noindent {\bf Keywords}: Almost pythagorean triples; prime 
ideals; imaginary quadratic field; ideal class group\\
{\bf 2010 Mathematics Subject Classification}: 11R04, 11R11, 20F05

\section{Introduction and notations}

Fix an arbitrary  square-free integer $m > 1$. Then the set
of equivalence classes of solutions of Diophantine equation
\begin{equation}
\label{main} x^2 + m\cdot y^2 = z^2
\end{equation}
will have a group structure. Indeed, given two arbitrary solutions
$(a_1,b_1,c_1)$ and $(a_2,b_2,c_2)$ of (\ref{main}) we can produce
another one $(A,B,C)$ via the formulas
\begin{equation}
\label{operation} A:= a_1a_2 - mb_1b_2, ~~B:= a_1b_2 + a_2b_1, ~~
C:= c_1c_2.
\end{equation}
Using projectivization as an equivalence relation on such triples,
we obtain a well defined binary operation on the set of the
equivalence classes, and hence an abelian group, which will be
denoted by $\P_m$ and called the {\it group of primitive almost
pythagorean triples}. 

Here is an alternative description of this group using a bit more technical terms:
Consider the multiplicative subgroup, say ${\cal A}_m$,  of non-zero elements 
whose norm is a square of 
a rational number in the imaginary quadratic field $\rati(\sqrt{-m})$. 
The non-zero rational numbers $\rati^*$ will make a subgroup of ${\cal A}_m$ 
and it is easy to see that the corresponding factor group ${\cal A}_m/\rati^*$ is $\P_m$.   

This group has been considered by various
authors (see e.g. \cite{Baldisserri}, \cite{Eckert}, \cite{Kulzer},
\cite{Lemmermeyer}) since the operation (\ref{operation}) was
introduced by Taussky in \cite{Taussky} for ordinary pythagorean
triples. It is well known that the group is infinitely generated and
has torsion only when $m=3$. Description of a basis has
been given however only for particular values of $m$ (see
\cite{Baldisserri}, \cite{Eckert} and \cite{Kulzer}). The goal of this paper is to 
present a basis of the group $\P_m$ using prime ideals and the
ideal class group of $\rati(\sqrt{-m})$, for all square-free integers $m > 3$.

Let me first introduce some notations, which will be similar to
notations used in \cite{Kulzer}. Consider the set of all ordered
primitive triples $(a, b, c)\in \inte \times\inte \times \natu$ such
that $a^2 + m\cdot b^2 = c^2$. A triple $(a,b,c)$ is called
primitive when the $\GCD(a,b,c)=1$. Two such triples $(a, b, c)$ and
$(A, B, C)$ are said to be equivalent if $\exists m, n \in \inte
\setminus \{0\}$ s.t. $m(a, b, c) = n(A, B, C)$, where $m(a, b, c) =
(ma, mb, |mc|)$. This is clearly an equivalence relation. The
equivalence class of $(a,b,c)$ will be denoted by $[a,b,c]$ and the
set of all such classes is $\P_m$. Note that $[a,b,c] = [-a,-b,c]$,
but $[a,b,c] \neq [-a,b,c]$. Every equivalence class $[a,b,c] \in
\P_m$ can be represented uniquely by a primitive triple $(\alpha,
\beta, \gamma)$, where $\alpha > 0$ and hence one could refer to
primitive triples to describe elements of the group. Following the
operation (\ref{operation}) we see that for any two classes $[a, b,
c],~[A, B, C] \in \P_m$
$$
[a, b, c] + [A, B, C] = [aA-mbB, aB+bA, cC].
$$
For the proof of the following theorem see \cite{Baldisserri} or
\cite{Kulzer}.

\noindent {\bf Theorem.} $\P_m$ is an infinitely generated abelian
group for each square-free $m > 1$. The identity element is
$[1,0,1]$ and the inverse of $[a, b, c]$ is $[a, -b, c]=[-a,b,c]$.
$\P_m$ is torsion free when $m\neq 3$.

Note that Zanardo and Zannier in \cite{Zanardo} studied the group 
consisting of the set of equivalence classes of solutions of $x^2 + y^2 = z^2$ in the ring $\O_K$ of integers of a 
number field $K$ and described a basis for the torsion-free part of that group.
The group I consider here is a proper subgroup of the one discussed in \cite{Zanardo}.

From now on let's assume that we have a fixed, square-free integer $m>3$, and denote the imaginary 
quadratic field $\mathbb Q (\sqrt{-m})$ by $K$. The corresponding ring of integers will be denoted 
by $\O_K$ and the ideal class group of $K$ by $Cl(K)$. I will denote the set of 
all rational prime numbers by $\rP$ and the set of all prime ideals of $\O_K$ by $P(\O_K)$ respectively.

\section{Ideal classes of order 2}

\begin{lem}\label{order 2}
Let $c=2^{i_0}\cdot p_1^{i_1}\cdot\ldots\cdot p_k^{i_k}$ be the prime decomposition of a natural number $c$, where 
$2$ is inert or splits in $K$ if $i_0\neq 0$. Suppose that for each odd prime $p_i,~i\in\{1,2,\ldots, k\}$ there exists a 
prime ideal $Q_i$ of $\O_K$ s.t. $N(Q_i) = p_i$, and let $Q_0= \langle 2\rangle$ if 2 is inert in 
$K$ and $Q_0\cdot Q'_0 = \langle 2\rangle$ if 2 splits in $K$ (here $Q'_0$ is the conjugate of a prime ideal $Q_0$). 
If the class of $Q_0^{i_0} \cdot Q_1^{i_1}\cdot \ldots \cdot Q_k^{i_k}$ has order 2 in $Cl(K)$, 
then there exist integers $u$ and $v$ s.t. either $u^2+mv^2=c^2$ or 
$u^2 + mv^2 = (2c)^2$. 
\end{lem}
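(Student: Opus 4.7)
The plan is to convert the hypothesis that $[I]$, where $I := Q_0^{i_0} Q_1^{i_1} \cdots Q_k^{i_k}$, has order $2$ in $Cl(K)$ into the statement $I^2 = \langle \alpha \rangle$ for some $\alpha \in \O_K$, and then to read off the desired Diophantine identity from $N(\alpha) = N(I)^2$ by expressing $\alpha$ in the integral basis of $\O_K$. The shape of that basis, and hence the shape of $\alpha$, is controlled by $m \bmod 4$; this is exactly the dichotomy that also governs the splitting of $2$ in $K$. The prime $2$ ramifies precisely when $m\equiv 1,2 \pmod 4$ (in which case $\O_K = \mathbb{Z}[\sqrt{-m}]$), and it splits or remains inert precisely when $m\equiv 3 \pmod 4$ (in which case $\O_K = \mathbb{Z}[(1+\sqrt{-m})/2]$). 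The lemma's hypothesis therefore rules out $i_0 \neq 0$ in the ramified case.

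I would then proceed by cases. When $m\equiv 1,2\pmod 4$ one has $i_0 = 0$ and $N(I) = c$, so any generator $\alpha$ of $I^2$ has the form $u + v\sqrt{-m}$ with $u,v \in \mathbb{Z}$, and taking norms gives $u^2 + mv^2 = c^2$ at once. When $m\equiv 3 \pmod 4$ with either $i_0 = 0$, or $i_0 \neq 0$ and $2$ split, one still has $N(I) = c$, but now $\alpha = (u + v\sqrt{-m})/2$ with $u \equiv v \pmod 2$, so $N(\alpha) = c^2$ forces $u^2 + mv^2 = (2c)^2$; if both $u$ and $v$ happen to be even, dividing each by $2$ upgrades this to $u^2 + mv^2 = c^2$, and otherwise $(2c)^2$ is the conclusion stated in the lemma.

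The main obstacle is the remaining subcase, in which $m\equiv 3 \pmod 4$, $2$ is inert in $K$, and $i_0 \neq 0$. Here $N(Q_0) = 4$, so $N(I) = 2^{i_0} c$ and a naive application of the previous calculation would produce an identity with far too large a right-hand side. The resolution is that when $2$ is inert, $Q_0 = \langle 2 \rangle$ is already principal, so $[Q_0^{i_0}] = 1$ and one may replace $I$ by $J := Q_1^{i_1} \cdots Q_k^{i_k}$ without changing the ideal class. Then $[J]$ still has order $2$ and $N(J) = c/2^{i_0}$, and writing $J^2 = \langle \beta \rangle$ with $\beta = (u + v\sqrt{-m})/2$ yields $u^2 + mv^2 = 4(c/2^{i_0})^2 = (c/2^{i_0-1})^2$; multiplying $u$ and $v$ each by $2^{i_0-1}$ then produces integers satisfying $u^2 + mv^2 = c^2$, completing the proof.
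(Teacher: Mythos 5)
Your proof is correct and follows essentially the same route as the paper: translate the order-$2$ hypothesis into principality of the squared ideal, take norms using the shape of the integral basis determined by $m \bmod 4$, and in the inert case discard the principal factor $\langle 2\rangle^{i_0}$ before taking norms. The only (harmless) difference is that in the inert subcase you scale $u,v$ by $2^{i_0-1}$ to land on $u^2+mv^2=c^2$, where the paper scales by $2^{i_0}$ to land on $(2c)^2$; both satisfy the stated disjunction.
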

\begin{proof}
If $[Q_0^{i_0} \cdot Q_1^{i_1}\cdot \ldots \cdot Q_k^{i_k}]^2 = [1]$, then there exists $z\in\O_K$ s.t. 
$$
\prod\limits_{t=0}^k Q_t^{2i_t} = \langle z\rangle = \left\langle \frac{u+v\sqrt{-m}}{2^{1-\delta}}\right\rangle,
$$
with $u,~v\in\inte$; $\delta= 0$ if $-m\equiv 1\pmod{4}$, and $\delta=1$ if otherwise. If 2 splits or ramifies in $K$, 
we can take the norm of both ends of the above equality to obtain 
$4^{1-\delta}c^2 = u^2 + mv^2$. If 2 is inert in $K$, then we apply the previous step to the product of 
prime ideals $[Q_1^{i_1}\cdot \ldots \cdot Q_k^{i_k}]^2 = [1]$ to obtain 
$$
4\left(\frac{c}{2^{i_0}}\right)^2 = u^2 + mv^2,~~~\mbox{which implies}~~~(2c)^2 = (2^{i_0}u)^2 + m(2^{i_0}v)^2.
$$
\end{proof}

Since every prime ideal of $\O_K$ lies over a unique rational prime, and every prime $p\in\natu$ lies below at least one prime ideal 
of $\O_K$ (see Theorem 20 of \cite{Marcus}), we have the canonical projection $\mu:P(\O_K)\lra \rP$, which has a lifting map
$$
l:\rP \lra P(\O_K)~~\mbox{such that}~~\mu\circ l = Id_{\rP}.
$$ 
Hence if we pick such a lifting $l$ and compose it with another canonical projection $\pi:P(\O_K) \lra Cl(K)$ we obtain a map $f$ 
and the following commutative diagram.
$$
\xymatrix{
P(\O_K) \ar[r]_-{\mu}  \ar[d]_{\pi} & \rP\ar@/^/[dl]^-{f} \ar@/_1pc/@{-->}[l]_-l \\
Cl(K)&
}
$$

\begin{note} 
Map $f$ does depend on the choice of the lifting $l:\rP\to P(\O_K)$.
\end{note}

\begin{note}
Since $\pi$ is a multiplicative homomorphism from the group generated by $P(\O_K)$, we can extend 
such map $f$ in a natural way, to a multiplicative function from $\natu$ to $Cl(K)$.
\end{note}

\begin{note}\label{c=even}
If we have a primitive triple  $(a, b, c)$ s.t. $a^2 + mb^2 = c^2$ then an odd prime $p\mid c$ 
iff the Legendre symbol $\left(\frac{-m}{p}\right) = 1$ (see Lemma 1.7 of \cite{Cox}).  It is also trivial to see that if $c$ 
is even in the primitive triple above, then we must have $-m\equiv 1\pmod{4}$, and if $-m\equiv 5\pmod{8}$, 
then $4\nmid c$. 
\end{note}

\noindent Let me now introduce two special subsets of $Cl(K)$ and of $\rP$ respectively. 

\begin{dfn}
The set of all elements of the ideal class group of $K$ which have order at most 2, will be denoted by $E$, that is 
$$
E:=\{ [A]\in Cl(K)~ ~ |~ ~A~\mbox{is an ideal of}~\O_K ~\mbox{and}~[A]^2=[1]\}
$$
Clearly, $E$ is a subgroup of $Cl(K)$.
\end{dfn}

\begin{dfn}
A subset $L\subset \rP$ is defined as the subset of rational primes, which have the Kronecker symbol $1$ modulo $m$, that is
$$
L:=\{p\in\rP~|~\left(\frac{-m}{p}\right) = 1\},
$$
where $\left(\frac{-m}{p}\right)$ is the Legendre symbol if $p$ is odd, and $-m\equiv 1\pmod{8}$ if $p=2$. Moreover, we define
$$
L_0:=f^{-1}(E)\cap L,~~\mbox{where}~f^{-1}(E)~\mbox{denotes the preimage of}~E~\mbox{in}~\rP.
$$
\end{dfn}

\begin{claim}
If  $-m\equiv 1\pmod{8}$ and $m>16$, then $2\notin L_0$, and $E\subsetneq Cl(K)$.
\end{claim}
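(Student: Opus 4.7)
The plan is to derive a contradiction from the assumption $2\in L_0$ by combining Lemma \ref{order 2} with the size bound $m>16$. First, I would note that $-m\equiv 1\pmod{8}$ places $2$ in $L$ (by the definition of the Kronecker symbol at $p=2$ used above) and also makes $2$ split in $\O_K$, so $\langle 2\rangle = Q_0\cdot Q_0'$ with $N(Q_0)=2$ and $Q_0\neq Q_0'$.

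Next, assume for contradiction that $2\in L_0$; then $f(2)=[Q_0]\in E$, i.e. $[Q_0]^2=[1]$. I would apply Lemma \ref{order 2} to $c=2$ with $i_0=1$ and no odd prime factors ($k=0$). Since $-m\equiv 1\pmod 4$ we are in the $\delta=0$ case, and the argument inside the proof of that lemma yields an element $z=(u+v\sqrt{-m})/2\in\O_K$ with $\langle z\rangle = Q_0^2$ and $u^2+mv^2 = 4c^2 = 16$.

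The key step is then a one-line size estimate: if $v\neq 0$, then $u^2+mv^2\geq m>16$, contradicting the equation. Hence $v=0$, so $u=\pm 4$ and $z=\pm 2$. But this gives $Q_0^2=\langle 2\rangle = Q_0\cdot Q_0'$, which forces $Q_0=Q_0'$, contradicting the splitting of $2$. Therefore $2\notin L_0$. Since $2\in L$, this in turn means $f(2)=[Q_0]\notin E$, which exhibits an element of $Cl(K)$ not in $E$, so $E\subsetneq Cl(K)$.

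I do not expect a genuine obstacle here: the only thing to track carefully is the case analysis in Lemma \ref{order 2} (verifying that $\delta=0$, so the relevant equation is $u^2+mv^2=16$ and not $u^2+mv^2=4$); once that equation is in place, the hypothesis $m>16$ makes everything immediate.
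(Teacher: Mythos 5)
Your argument is correct and is essentially the paper's: both reduce to the norm equation forced by a principal generator of $Q_0^2$ and use $m>16$ to rule it out. The only cosmetic differences are that you extract $u^2+mv^2=16$ directly via Lemma \ref{order 2} and settle the degenerate case $v=0$ by unique factorization of ideals (yielding $Q_0=Q_0'$, contradicting that $2$ splits), whereas the paper multiplies by the conjugate ideal and bounds the product of the two norms against $16^2$; both are sound.
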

\begin{proof}
Since  $-m\equiv 1\pmod{8}$, we have the splitting of $\langle 2\rangle$ into the product of prime ideals
$$
\langle 2 \rangle = \left\langle 2,~\frac{1+\sqrt{-m}}{2}\right \rangle\cdot  \left\langle 2,~\frac{1-\sqrt{-m}}{2}\right\rangle = P\cdot P',
$$
and it's enough to show that $P^2 =\langle 4,~\frac{1-m+2\sqrt{-m}}{4}\rangle$ is not principal in $\O_K$. If one assumes that 
there are $x,y\in\inte$ s.t. $P^2 = \langle\frac{x+y\sqrt{-m}}{2}\rangle$, then we could find $a,b\in\inte$ s.t.
$$
4 = \frac{a+b\sqrt{-m}}{2}\cdot \frac{x+y\sqrt{-m}}{2},~~~\mbox{and hence}~~~16^2 = (a^2+mb^2)\cdot (x^2+my^2).
$$
It's easy to see that assumptions like $b=0$ or $y=0$ lead to a contradiction (for example, by using the norm), 
and if $by\neq 0$, then $ (a^2+mb^2)\cdot (x^2+my^2) >16^2$, which is also not allowed.
\end{proof}

\begin{note}
If $m=7$, then $P^2 =\langle\frac{-3+1\sqrt{-7}}{2}\rangle$, and $[-3,1,4]\in \P_7$. If $m=15$, 
then $P^2 =\langle\frac{1+1\sqrt{-15}}{2}\rangle$, and $[1,1,4]\in \P_{15}$. 
\end{note}

Recall that Theorem 2 of \cite{Kulzer} describes a basis of $\P_m$ for four special cases of $m$, which correspond to the 
class numbers $\{1,1,2,2\}$ of $\O_K$. It says:

\noindent {\tt Fix $m \in \{2, 3, 5, 6\}$. Then $\P_m$ is generated by all  primitive triples $(a,b,p)$ 
where $a>0$, and $p$ is prime such that $\exists u,v\in\inte$ with $p = u^2 +mv^2$, or $2p = u^2 +mv^2$.}

Our next theorem generalizes results of \cite{Kulzer} and \cite{Baldisserri} (cf. also with \cite{Eckert}), and 
gives a free basis of the group $\P_m$ when every element of the ideal class group $Cl(K)$ has order 
at most 2. Approach of Baldisserri in \cite{Baldisserri} covered several particular cases of $m$ (see Theorem 2 on 
page 305 of \cite{Baldisserri}). In all those 36 cases $Cl(K)$ is either trivial or isomorphic to the direct product 
of several copies (up to 4) of $\inte_2$.

\begin{thm}
If $L_0=L$, then the group $\P_m,~\mbox{with}~m>3,$ is generated freely by the set of all primitive triples 
$(a,b,p)$ or $(a,b,2p)$, where $p\in L$ and 
$a$ and $b$ are positive integers such that $a^2 + mb^2 = p^2$ or $a^2 + mb^2 = (2p)^2$.
\end{thm}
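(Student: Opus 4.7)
I plan to identify $\P_m$ with ${\cal A}_m/\rati^*$ as described in the introduction and exhibit an explicit isomorphism $\Phi\colon\P_m\to\bigoplus_{p\in L}\inte$ whose inverse sends the standard basis vector at $p$ to $\tau_p$. For each $p\in L$ I first fix a prime ideal $Q_p\subset\O_K$ over $p$; when $p=2$ (so $-m\equiv 1\pmod 8$) I use one of the two primes in the splitting $\langle 2\rangle=Q_2\cdot Q_2'$. Since $L_0=L$ the class $[Q_p]$ lies in $E$, so Lemma \ref{order 2} applied with $c=p$ yields $z_p=(u_p+v_p\sqrt{-m})/2^{1-\delta}\in\O_K$ with $\langle z_p\rangle=Q_p^2$ and $u_p^2+mv_p^2\in\{p^2,(2p)^2\}$. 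After a sign normalization this produces the primitive triple $\tau_p$ of the form stated in the theorem, and both $u_p$ and $v_p$ are nonzero: $v_p=0$ would force $Q_p=Q_p'$, making $p$ ramify, and $u_p=0$ would force $p\mid m$, both contradicting $p\in L$ for $m>3$.

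Next I will define $\Phi([z])_p=\tfrac12\bigl(v_{Q_p}(\langle z\rangle)-v_{Q_p'}(\langle z\rangle)\bigr)$, using the usual extension of valuations to fractional ideals of $K$. This is a well-defined integer: for split $p$ we have $v_{Q_p}(\langle q\rangle)=v_{Q_p'}(\langle q\rangle)=v_p(q)$ for $q\in\rati^*$, so the difference depends only on $[z]\in{\cal A}_m/\rati^*$, while the sum $v_{Q_p}(\langle z\rangle)+v_{Q_p'}(\langle z\rangle)=v_p(N(z))$ is even because $N(z)\in(\rati^*)^2$. A direct check gives $\Phi(\tau_p)=\mathbf{e}_p$, so $\Phi$ is surjective and the $\tau_p$ generate $\P_m$. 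One can also exhibit generation constructively: writing $\langle z\rangle=\prod_p Q_p^{\alpha_p}(Q_p')^{\beta_p}$ times $\langle q\rangle$-factors from inert primes, the identity $Q_p^{\alpha_p}(Q_p')^{\beta_p}=\langle p\rangle^{\min(\alpha_p,\beta_p)}\cdot Q_p^{\alpha_p-\beta_p}$ (with negative exponents read as powers of $Q_p'$) rewrites $z$ as $r\cdot\prod_p z_p^{(\alpha_p-\beta_p)/2}$ for some $r\in\rati^*$, and hence $[z]=\sum_p\Phi([z])_p\,\tau_p$ in $\P_m$.

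For injectivity, suppose $\Phi([z])=0$. Then $v_{Q_p}(\langle z\rangle)=v_{Q_p'}(\langle z\rangle)$ at every $p\in L$, so the split contributions to $\langle z\rangle$ pair off into rational $\langle p\rangle$-factors. By Note \ref{c=even}, a primitive triple $(a,b,c)$ has $c$ divisible only by split primes and, possibly, by an inert $2$ to the first power; in particular no ramified prime occurs in the factorization of $\langle z\rangle$, and any inert contribution is itself rational. Hence $\langle z\rangle=\langle r\rangle$ for some $r\in\rati^*$, so $z/r\in\O_K^\times=\{\pm 1\}$ since $m>3$, forcing $[z]=0$. The main obstacle in filling out this plan is the uniform bookkeeping across the several splitting behaviors of $2$ (cases $-m\equiv 1,5\pmod 8$ and $-m\equiv 2,3\pmod 4$) and the associated $2^{1-\delta}$ denominator in the Lemma \ref{order 2} description of $z_p$; once those cases are handled consistently, the rest reduces to unique factorization of ideals in $\O_K$ and the fact that $|\O_K^\times|=2$.
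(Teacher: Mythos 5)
Your proposal is correct, but it proves the theorem by a genuinely different route than the paper. The paper works entirely at the level of integer triples: the identity $(bu+av)(bu-av)=p^2(4b^2-v^2)$ drives an elementary divisibility argument that is used three times --- once to show each $p\in L_0$ admits a unique coprime pair $(a,b)$ of the stated form, once to peel primes off the third component inductively and establish generation, and once to show any relation collapses to triples whose third component is a power of $2$, which are then handled separately via Claim 1 (forcing $m\in\{7,15\}$) and the auxiliary Lemma 2. You instead transfer everything to ideal arithmetic via $\P_m\cong{\cal A}_m/\rati^*$ and read the result off from unique factorization of ideals together with $\O_K^\times=\{\pm1\}$; your valuation homomorphism $\Phi([z])_p=\tfrac12\bigl(v_{Q_p}(\langle z\rangle)-v_{Q_p'}(\langle z\rangle)\bigr)$ makes freeness immediate and uniform (no special casing of $p=2$, no analogue of Lemma 2 needed), and generation is a short computation with the factorization of $\langle a-b\sqrt{-m}\rangle$. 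This is much closer in spirit to the paper's own Section 3 argument for Theorem 2 than to its proof of Theorem 1, and it buys brevity and conceptual clarity; what the paper's elementary route buys is a self-contained recursive algorithm visible at the level of the triples themselves. One small point you should still make explicit, since the theorem's generating set is ``all'' primitive triples of the stated form: for each $p$ there is exactly one such triple with $a,b>0$, and in particular the forms $(a,b,p)$ and $(a',b',2p)$ cannot both occur. In your framework this is two lines --- any such triple yields a generator of $Q_p^2$ or $(Q_p')^2$, generators of a principal ideal agree up to $\pm1$, and a generator of the form $(a'+b'\sqrt{-m})/2$ forces $a',b'$ odd, which is incompatible with one of the form $a+b\sqrt{-m}$ --- but it is part of the statement and should not be omitted.
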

\begin{proof}
First let's prove that if $p\in L_0$, then there exists a unique coprime pair $(a,b)\in\natu\times\natu$ s.t. either $a^2 + mb^2 = p^2$ 
or $a^2 + mb^2 = (2p)^2$, but not both. If $p\in L_0$, then from our lemma 1 we deduce that 
there exist integers $u$ and $v$ s.t. either $u^2+mv^2=p^2$ or $u^2 + mv^2 = (2p)^2$. If we assume that $p$ is odd and 
there are two pairs of relatively prime integers $(a,b)$ and $(u,v)$ s.t. $a^2+mb^2=p^2$ and at the same time $u^2+mv^2=(2p)^2$, 
we consider two new elements of $\P_m$:
$$
[u,v,2p]\pm [a,b,p] = \left\{
\begin{array}{lc}
[au - mbv, ~av + bu, ~2p^2] \\

[-au - mbv, ~-av + bu, ~2p^2].
\end{array}\right.
$$
Since $(bu+av)(bu-av) = b^2u^2+b^2mv^2 - (a^2v^2+mb^2v^2) = p^2(4b^2-v^2)$, if $v\neq \pm 2b$ we have 
$p~|~bu+av$ or $p~|~bu-av$ or both. In the last case we'd have $p~|~2bu$, which is impossible for an odd prime and two pairs of 
relatively prime integers $(a,b)$ and $(u,v)$. If $p$ divides only $bu+av$, then $p^2~|~bu+av$, and hence $p^2~|~au-mbv$, i.e. 
$[u,v,2p]+[a,b,p] = [s,t,2]$ for some integers $s$ and $t$. Since $m>3$ we deduce that $t=0$ and $av=-bu$, 
which contradicts the assumptions. If $p$ divides only $bu-av$, a similar argument leads to the contradiction as well. 
If $v=\pm 2b$, we again have a contradiction with $(u,v)=1$. Analogously one can consider the remaining cases when $p$ is odd 
and prove the uniqueness of a coprime pair $(a,b)\in\natu\times\natu$ s.t. either $a^2 + mb^2 = p^2$ 
or $a^2 + mb^2 = (2p)^2$. If $p=2$, we could have $a^2+mb^2=16$ only for 
$[3,1,4]\in\P_7$ or for $[1,1,4]\in\P_{15}$.

Secondly, we prove that the set of all such primitive triples $(a,b,2^{\tau}p)$, with $p\in L_0$ and $\tau\in\{0,1\}$ 
generate the group $\P_m$. Take any primitive $[a,b,c]\in\P_m$ and assume that the prime decomposition of $c$ is 
$c=2^{i_0}\cdot p_1^{i_1}\cdot\ldots\cdot p_k^{i_k}$, where $i_0\geq 0$ and $p_i$ are distinct odd rational primes. 
Then by Note 3 and our lemma 1, for each $i\in\{1,\ldots,k\}$ there exist $u_i,~v_i\in\natu\times\natu$ s.t. 
$u_i^2+mv_i^2 = p_i^2$ or $u_i^2+mv_i^2 = (2p_i)^2$, that is, either $[u_i,v_i,p_i]$ or $[u_i,v_i,2p_i]$ is a generator.
Assume without loss of generality that $i_1> 0$ and consider 
\begin{equation}\label{existence}
[a,b,c]\pm [u_1,v_1,2^{\tau}\cdot p_1] = \left\{
\begin{array}{lc}
[au_1 - mbv_1, ~av_1 + bu_1, ~c\cdot 2^{\tau}\cdot p_1] \\

[-au_1 - mbv_1, ~-av_1 + bu_1, ~c\cdot 2^{\tau}\cdot p_1],
\end{array}\right.
\end{equation}
with $\tau\in\{0,1\}$. As above, we have $(bu_1+av_1)(bu_1-av_1) \equiv 0\pmod{p^2_1}$, and hence $p^2_1$ divides at least one of 
$(bu_1+av_1)$ or $(bu_1-av_1)$, or $p_1$ divides each of these numbers. Assuming the last possibility we'll get a contradiction with 
the fact that our triples on the left hand side of (\ref{existence}) are primitive. Assuming the first possibility we obtain 
that $p^2_1~|~ au_1 - mbv_1$. Then we have  
$$
[a,b,c] = [-u_1,v_1,2^{\tau}\cdot p_1] + [D,E,2^{\tau}\cdot\left(\frac{c}{p_1}\right)],
$$
which implies that 
$$
[a,b,c] = \sum\limits_{j=1}^k \pm r_j\cdot[u_j,v_j,2^{\tau_j}p_j] + [\gamma,\omega,2^n],
$$
where $\tau_j\in\{0,1\}$ and $r_j\in\{0,\ldots, i_j\}$ for each $j\in\{1,\ldots,k\}$, and $n\geq 0$. According to our 
note 3, $[\gamma,\omega,2^n]=[1,0,1]$, unless $-m\equiv 1\pmod{8}$. In the latter case, claim 1 guaranties that 
$m\in\{7,15\}$, and one can easily prove using induction and applying the divisibility argument we just used above 
to the elements 
$$
[\gamma,\omega,2^n]\pm [q,r,4] = \left\{
\begin{array}{lc}
[q\gamma - mr\omega, ~q\omega + r\gamma, ~ 2^{n+2}] \\

[-q\gamma - mr\omega, ~-q\omega + r\gamma, ~ 2^{n+2}],
\end{array}\right.
$$
that  $[\gamma,\omega,2^n]=\pm(n-1)\cdot[q,r,4]$, where $[q,r,4]=[3,1,4]$ if 
$m=7$, and $[q,r,4]=[1,1,4]$ if $m=15$ respectively.

Thirdly, we show that the generating set is free of any nontrivial relations. Suppose that there exists a primitive 
triple $[K,L,M]\in\P_m$ with two different presentations by elements from the generating set, i.e. 
\begin{equation}\label{cancel2}
\sum\limits_{j\in J} z_j= [K,L,M] = \sum\limits_{t\in T} z_t,
\end{equation}
where each $z_i=[a_i,~b_i,~2^{\tau}p_i],~\mbox{with}~\tau\in\{0,1\}~\mbox{and}~i\in\{J\cup T\}$ is a primitive triple with 
$p_i\in L_0$. We can assume without loss of generality that $J\cap T=\emptyset$. 
Since the third components get multiplied when we add two elements of $\P_m$, we can write the third components  
of the left and right hand sides of (\ref{cancel2}) (before reducing to the corresponding primitive triples) as 
$$
2^{j_0}\cdot\prod\limits_{j\in J}p_j~~~~~\mbox{and} ~~~~~2^{t_0}\cdot\prod\limits_{t\in T}p_t~~~~~\mbox{respectively.}
$$
Since the left and right hand sides of (\ref{cancel2}) are equal, and since $J\cap T=\emptyset$, we see that $M=2^n,~n\geq 0$.
Then we deduce from one of our notes above that $[K,L,M]=[1,0,1]$, unless $-m\equiv 1\pmod{8}$. 
Suppose now that $-m\equiv 1\pmod{8}$, and the third component of one of $z_j$ is divisible by an odd prime $p$. 
Then we could write the left hand 
side of (\ref{cancel2}) as $k\cdot[a,b,2^{\tau}\cdot p] + [U,V,W]$, where $k\geq 1$ and $\tau\in\{0,1\}$. Writing 
$k\cdot[a,b,2^{\tau}\cdot p] $ as $[\alpha,\beta,2^{\varepsilon}\cdot p^k]$ (see Lemma 2. below), we can rewrite 
the left hand side of (\ref{cancel2}) as $[\alpha,\beta,2^{\varepsilon}\cdot p^k] + [U,V,W]$ where the triples 
$(\alpha,\beta,2^{\varepsilon}\cdot p^k)$ and $(U,V,W)$ are not necessarily primitive, but $(p,~W)=1$  and 
$(\beta, p)=1$. Thus 
$$
[\alpha,\beta,2^{\varepsilon}\cdot p^k] + [U,V,W] = 
[\alpha U-m\beta V,~\alpha V + \beta U,~2^{\varepsilon}\cdot p^k\cdot W] = [K,~L,~2^n],
$$
which implies that $p~|~\alpha U-m\beta V$, and also $p~|~\alpha V + \beta U$. In such situation we would also 
have $p~|~W$, which is not allowed. Thus the left and right hand sides of (\ref{cancel2}) 
could only have group elements where the third component is a power of 2. But there is only one such basis element, 
which is $[*,~*,~4]$, and hence we can not have two different nontrivial presentations of $[K,L,M]$. 

\end{proof}

\begin{lem}
Suppose for an odd prime $p$ we have a primitive triple $[u,v,p^bw]\in\P_m$ s.t. $(p,w)=1$, and $b\geq 1$. 
Take any $n\in\natu$. If $[u_n,v_n,w_n]=n\cdot[u,v,p^bw]$ with $(u_n,v_n)=1$, then $p^{nb}~|~w_n$.
\end{lem}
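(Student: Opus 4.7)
The plan is to pass to the imaginary quadratic field $K=\mathbb{Q}(\sqrt{-m})$ and exploit unique factorization of ideals in $\O_K$. Writing $(u+v\sqrt{-m})^n = A_n + B_n\sqrt{-m}$, the class $n\cdot[u,v,p^bw]$ is represented by the (generally non-primitive) triple $(A_n,B_n,(p^bw)^n)$, which satisfies $A_n^2+mB_n^2=(p^bw)^{2n}$. Letting $g=\GCD(A_n,B_n)$, we have $g^2\mid(p^bw)^{2n}$, hence $g\mid(p^bw)^n$, and the primitive representative is $[A_n/g,\,B_n/g,\,p^{nb}w^n/g]$. The desired conclusion $p^{nb}\mid w_n$ is therefore equivalent to $p\nmid g$, and this is what I would aim to prove.

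The first step is to check that $p$ splits in $\O_K$. Primitivity of $(u,v,p^bw)$ together with $u^2+mv^2=(p^bw)^2$ quickly forces $\GCD(u,v)=1$. If $p$ were inert, then the prime ideal $(p)$ would divide $(u+v\sqrt{-m})$ to order $b\geq 1$, implying $p\mid u$ and $p\mid v$, a contradiction. If $p$ ramified, then $p\mid m$; but $m$ being square-free makes $v_p(u^2)$ (even) and $v_p(mv^2)=1+2v_p(v)$ (odd) of different parity, so their sum cannot have $v_p$ equal to the even number $2b$. Hence $(p)=\mathfrak{p}\overline{\mathfrak{p}}$ with $\mathfrak{p}\neq\overline{\mathfrak{p}}$.

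With $p$ split, I would factor $(u+v\sqrt{-m})$ as an ideal. Its norm is $p^{2b}w^2$; if both $\mathfrak{p}$ and $\overline{\mathfrak{p}}$ divided $(u+v\sqrt{-m})$, then $(p)$ would, again forcing $p\mid u$ and $p\mid v$. So exactly one of them, say $\mathfrak{p}$, appears, and $(u+v\sqrt{-m})=\mathfrak{p}^{2b}\cdot J$ with $J$ coprime to $(p)$. Raising to the $n$-th power gives $(A_n+B_n\sqrt{-m})=\mathfrak{p}^{2nb}\cdot J^n$, and $J^n$ remains coprime to $(p)$. If $p\mid g$, then $p$ divides $A_n+B_n\sqrt{-m}$ in $\O_K$, so $\overline{\mathfrak{p}}$ would divide $(A_n+B_n\sqrt{-m})$, contradicting this factorization. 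Therefore $p\nmid g$, which yields $p^{nb}\mid w_n$.

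The main obstacle I anticipate is the case analysis for the behaviour of $p$ in $\O_K$, in particular ruling out ramification cleanly via square-freeness of $m$; once the splitting case is isolated, the ideal-theoretic conclusion is essentially immediate.
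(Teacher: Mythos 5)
Your proof is correct, but it takes a genuinely different route from the paper's. The paper stays entirely inside $\inte$: it computes $2\cdot[u,v,p^bw]=[u^2-mv^2,\,2uv,\,p^{2b}w^2]$, notes that a common factor of $p$ would force $p\mid 2uv$, and then runs an explicit induction producing closed forms for the first two components of $n\cdot[u,v,p^bw]$ modulo $p$ (the second component is shown to be $p\cdot B_n\pm 2^{n-1}m^{\lfloor n/2\rfloor}v^{n}\cdot(\ldots)$, visibly prime to $p$), from which $p\nmid\gcd(u_n,v_n)$ and hence $p^{nb}\mid w_n$ follow. You instead lift to $\O_K$, reduce the claim to $p\nmid\gcd(A_n,B_n)$ where $A_n+B_n\sqrt{-m}=(u+v\sqrt{-m})^n$, prove that $p$ must split, and observe that $\langle u+v\sqrt{-m}\rangle=\mathfrak p^{2b}J$ involves only one of the two conjugate primes above $p$ --- a property obviously preserved under taking $n$-th powers. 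This is cleaner and more conceptual: it avoids the paper's somewhat delicate inductive formulas and makes transparent why the exact exponent $2nb$ survives. What the paper's computation buys is self-containedness at the level of integer arithmetic, which matters little since the surrounding results already lean on ideal theory. One small point to tighten in your write-up: when ruling out ramification, the bare parity comparison of $v_p(u^2)$ and $v_p(mv^2)$ is not by itself conclusive, since $v_p(u^2+mv^2)$ equals the \emph{minimum} of the two valuations and that minimum could be the even one; you should add that $\gcd(u,v)=1$ forces this minimum to be $0$ or $1$, neither of which can equal $2b\ge 2$. (Alternatively, in every application in the paper one has $p\in L$, so $p$ splits by hypothesis and the case analysis collapses.)
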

\begin{proof}
First observe that $[u_2,v_2,w_2]= [u^2-mv^2,2uv,p^{2b}w^2]$ and if $p^{2b}$ doesn't divide $w_2$ 
we must have $p~|~2uv$, which is not allowed. Let us write $u^2-mv^2=(p^bw)^2-2mv^2=p\cdot A_2 -2mv^2$ for some 
$A_2\in\inte$. Then one can use induction to show that 
$$
[u_{2k},~v_{2k},~w_{2k}] = [p\cdot A_{2k} \pm 2^{2k-1}\cdot m^k\cdot v^{2k},~p\cdot B_{2k} - \pm 
2^{2k-1}\cdot m^{k-1}\cdot u\cdot v^{2k-1},~ p^{2kb}\cdot w^{2k}]
$$
with both $A_{2k}~\mbox{and}~B_{2k}\in\inte$. Analogously one proves a similar formula for $n=odd$, 
where the second component of $(2k+1)\cdot[u,v,p^bw]$ will be $v_{2k+1} = p\cdot B_{2k+1} \pm 2^{2k}\cdot m^k\cdot v^{2k+1}$. 
Clearly, in both cases these formulas imply that $p^{nb }~|~w_n$.
\end{proof}

\begin{example} Let $m=35$, then $Cl(K)\cong C_2$ and we have 
$$
L_0=L=\{3, 11, 13, 17, 29, 47, 71, 73, 79, 83,  97, 103, 109, 149, 151, \ldots\}.
$$
Here are first few generating triples which satisfy equation $a^2+mb^2=p^2$:
$$
\{[1, 12, 71],~[17, 12, 73], ~[43, 12, 83],~ [131, 12, 149],~\ldots\},
$$
and here are several first generating triples satisfying equation  $a^2+mb^2=(2p)^2$:
$$
\{[1, 1, 2\cdot3], [13, 3, 2\cdot11], [19, 3, 2\cdot13],[29, 3, 2\cdot17], [23, 9, 2\cdot29], [31, 15, 2\cdot47],[157, 3, 2\cdot79], \dots\}.
$$
\end{example}

\section{Ideal classes of higher order}

From now on I will assume that $Cl(K)$ has elements of order higher than 2 and hence $E$ will be a proper subgroup 
of $Cl(K)$. In such a case we can extend the diagram defining map $f$ to the following one.
\begin{equation}\label{diagram2}
\xymatrix{
& P(\O_K) \ar[r]_-{\mu}  \ar[d]_{\pi} & \rP\ar@/^/[dl]|-{f} \ar@/_1pc/@{-->}[l]_-l \ar[d]^{g}& \\
E ~ \ar@{^{(}->}[r] & Cl(K) \ar[r]^{\rho} & Cl(K)/E \ar@{->>}[r] & 0
}
\end{equation}
Here map $\rho$ is the canonical projection onto the factor group $Cl(K)/E$, and the map $g:=\rho\circ f$. 

Since $Cl(K)$ is finite, we can present the factor group $Cl(K)/E$ as a direct sum of cyclic groups
$Cl(K)/E\cong G_1\times G_2 \times\cdots\times G_n$ with corresponding orders denoted by $h_j :=|G_j |$, 
$j\in\{1,2,\ldots,n\}$. For each $G_j$ pick an ideal $P_j\in P(\O_K)$, s.t. 
$G_j = \langle \rho\circ\pi(P_j)\rangle$, (c.f. with the presentation of the 
factor group $Cl(S)/M$ as a direct sum of cyclic groups and the corresponding construction right after that on page 
84 of \cite{Zanardo}). 

\noindent Now let's use the lifting $l:\rP \to P(\O_K)$ to construct a map $\beta:L\to \P_m$ as follows. 
First, take any prime $p\in L\setminus L_0$.

{\bf Case 1).} Suppose that $\mu(P_j) = p$ for some $j\in\{1,2,\ldots, n\}$. Then $(\pi(P_j))^{h_j}\in E$ and by 
Lemma 1 above we can write $(2^{1-\delta}p^{h_j})^2 = u^2 + mv^2$ for some positive integers $u$ and $v$ 
(where as above, $\delta \in \{0,1\}$). If we suppose that $p~|~\gcd(u,v)$ we will deduce that $P'_j$ (the conjugate ideal of $P_j$) 
divides in $\O_K$ a power of $P_j$, which is impossible, therefore we can assume that $\gcd(u,v) = 1$. 
Thus we obtained a primitive almost pythagorean
triple $(u,v,2^{1-\delta}\cdot p^{h_j})$, which is unique up to signs of $u$ and $v$. Therefore we define 
$$
\beta(p):=[u,~v,~2^{1-\delta}\cdot p^{h_j}],~~~\mbox{where}~~~(u,v)\in\natu\times\natu.
$$

{\bf Case 2).} Suppose further that $p\in L\setminus(L_0\cup \left\{\mu(P_j)~|~j\in\{1,2,\ldots,n\}\right\})$. Then for 
the prime ideal $P=l(p)$ there exist nonegative integers $a_{jp}\in\{0,1,\ldots,h_j-1\}$ such that
\begin{equation}\label{case2}
P\cdot P_1^{a_{1p}}\cdot\ldots\cdot P_n^{a_{np}} \in E,~~\mbox{and hence}~~
P^2\cdot P_1^{2a_{1p}}\cdot\ldots\cdot P_n^{2a_{np}} = \left\langle \frac{u+v\sqrt{-m}}{2^{1-\delta}}\right\rangle
\end{equation}
for some $(u,v)\in\inte\times\inte$. 
\begin{claim}
$$
\gcd\left(\frac{u}{2^{1-\delta}},\frac{v}{2^{1-\delta}}\right) = 1.
$$
\end{claim}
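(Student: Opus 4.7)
The plan is to argue by contradiction. Suppose that some rational prime $r$ divides both $u/2^{1-\delta}$ and $v/2^{1-\delta}$. Then $r$ divides $z=(u+v\sqrt{-m})/2^{1-\delta}$ in $\O_K$, so the principal ideal $\langle r\rangle$ divides
$$
I := P^2\cdot P_1^{2a_{1p}}\cdots P_n^{2a_{np}}
$$
as ideals of $\O_K$. I would derive a contradiction by comparing the prime ideal factorisation of $I$ with the decomposition of $\langle r\rangle$ in $\O_K$.

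The preliminary step is to pin down the splitting behaviour of the ideals appearing in $I$. Since $p\in L$, the prime $p$ splits in $K$, so $P=l(p)$ has $N(P)=p$. Each $P_j$ is chosen so that $\rho\circ\pi(P_j)$ generates the nontrivial cyclic factor $G_j$ of $Cl(K)/E$; this forces $P_j$ to be split. Indeed, if $P_j$ were inert then $P_j=\langle q_j\rangle$ would be principal, so $\pi(P_j)=[1]$, while if $P_j$ were ramified then $[P_j]^2=[\langle q_j\rangle]=[1]$ would place $[P_j]\in E$ and give $\rho\circ\pi(P_j)=0$; either way $G_j$ would be trivial. Consequently $N(P_j)=q_j$. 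Moreover the rational primes $p,q_1,\ldots,q_n$ are pairwise distinct: $p\neq q_j$ is built into the hypothesis of Case 2, and $q_i=q_j$ with $P_i\neq P_j$ would force $P_j=P_i'$, whence $[P_i][P_j]=[\langle q_i\rangle]=[1]$, i.e.\ $\rho\circ\pi(P_j)=-\rho\circ\pi(P_i)$, contradicting the direct sum decomposition $Cl(K)/E=G_1\times\cdots\times G_n$.

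With this in place I would split into three cases according to how $r$ decomposes in $\O_K$. If $r$ is inert, $\langle r\rangle$ is a prime ideal of norm $r^2$, which cannot appear among the prime divisors of $I$ since they all have prime norm ($p$ or some $q_j$). If $r$ ramifies, $\langle r\rangle=R^2$ with $R$ a prime of norm $r$ that must divide $I$; but by the previous paragraph none of $P,P_1,\ldots,P_n$ is ramified. If $r$ splits, $\langle r\rangle=RR'$ with two distinct prime ideals both dividing $I$; yet the prime divisors of $I$ lie over pairwise distinct rational primes, so no two of them can share the residue $r$. Each case yields a contradiction, so no such $r$ exists, which is the claim.

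The main obstacle is really the bookkeeping in the middle paragraph, in particular verifying that the chosen generators $P_j$ must be split and lie over pairwise distinct rational primes distinct from $p$. Once that structural information is extracted from the direct sum decomposition of $Cl(K)/E$ and from the hypothesis of Case 2, the three-way case analysis on the splitting of $r$ is immediate and exhaustive.
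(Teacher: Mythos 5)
Your proof is correct and follows essentially the same route as the paper: a common prime divisor $r$ of $u$ and $v$ forces $\langle r\rangle$ to divide $P^2\cdot P_1^{2a_{1p}}\cdots P_n^{2a_{np}}$, which is then shown to be incompatible with the prime ideal factorization of that product. You are in fact somewhat more careful than the paper, which treats only the split case explicitly and asserts without detailed justification that the ideals $P, P_1,\ldots,P_n$ have classes of order greater than $2$ and pairwise distinct norms --- facts you verify from the hypothesis of Case 2 and the direct-sum decomposition of $Cl(K)/E$.
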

\begin{proof}
Suppose that we have a prime $q$, which splits and divides both $u$ and $v$. Then if $Q:=l(q)$, we have 
$\langle q\rangle = Q\cdot Q' ~|~P^2\cdot P_1^{2a_{1p}}\cdot\ldots\cdot P_n^{2a_{np}}$. But this is 
impossible since none of the idelas $\{P,~P_1,~\ldots,~P_n\}$ has order 2 in $Cl(K)$ and they all have different norms, 
while $N(Q)=N(Q')=q$. 
\end{proof}
Thus, as in the first case, we found a relatively prime pair of integers $(u,v)$ s.t. 
$$
u^2 + mv^2 = (2^{1-\delta}\cdot p\cdot p_1^{a_{1p}}\cdot\ldots p_n^{a_{np}})^2.
$$ 
Note that this time, the pair $(u,v)$ is not necessarily unique. If, for example, $h_1=2t$, and we found a triple 
$(u_1,~v_1,~2^{1-\delta}\cdot p\cdot p_1^t)$, then for the triple $(u,~v,~2^{1-\delta}\cdot p^{h_1})$ found in case 1) above,
one of the triples 
$$
[u_1,~v_1,~2^{1-\delta}\cdot p\cdot p_1^t] \pm [u,~v,~2^{1-\delta}\cdot p^{h_1}]
$$
would be another primitive almost pythagorean triple with the third component $2^{1-\delta}\cdot p\cdot p_1^t$ and the first  
two components different from $\pm u_1$ and $\pm v_1$. We can alternate the construction giving (\ref{case2}) and show that 
each $a_{jp}$ may be taken from the set $\{0,1,\ldots, \left[\frac{h_j}{2}\right]\}$. Indeed if, for example 
$a_{1p}>\left[\frac{h_1}{2}\right]$, we can multiply the product
$P\cdot P_1^{a_{1p}}\cdot\ldots\cdot P_n^{a_{np}} $ by $\langle p_1\rangle^{h_1-a_{1p}}=(P_1\cdot P_1')^{h_1-a_{1p}}$ and 
obtain an element 
$$
P\cdot (P'_1)^{b_{1p}}\cdot\ldots\cdot P_n^{a_{np}} \in E~~~\mbox{with}~~~b_{1p} = h_1-a_{1p} \leq\left[\frac{h_1}{2}\right].
$$

Now we are ready to define map $\beta: L\setminus L_0 \to \P_m$ for 
$p\in L\setminus(L_0\cup \left\{\mu(P_j)~|~j\in\{1,2,\ldots,n\}\right\})$. Take any such prime number $p$, then as we just 
explained above, there exist relatively prime integers $u$ and $v$ such that 
$u^2 + mv^2 = (2^{1-\delta}\cdot p\cdot p_1^{a_{1p}}\cdot\ldots p_n^{a_{np}})^2$, and $\{a_{1p},\ldots,a_{np}\}$ are the 
coordinates of the ideal $P=l(p)$ in $Cl(K)/E$ written in the basis $\{\rho\circ\pi(P_1)~or~\rho\circ\pi(P'_1),~\ldots,~\rho\circ\pi(P_n)~
or~\rho\circ\pi(P'_n)\}$, which means that 
$a_{jp}\leq\left[\frac{h_j}{2}\right]$ for each $j$. Among such primitive triples
$(u,~v,~2^{1-\delta}\cdot p\cdot p_1^{a_{1p}}\cdot\ldots p_n^{a_{np}})$ take the one with the smallest value of $|u|$, 
say $u_0$, and define 
$$
\beta(p):=[u_0,~v_0,~2^{1-\delta}\cdot p\cdot p_1^{a_{1p}}\cdot\ldots p_n^{a_{np}}],~~~\mbox{where}~~~(u_0,~v_0)\in\natu\times\natu.
$$
We can extend map $\beta$ to elements of $L_0$ as well, by defining $\beta(p):=[a,b,p]$ if $a^2+mb^2=p^2$, and 
$\beta(p):=[a,b,2p]$ if $a^2+mb^2=(2p)^2$, when $p\in L_0$. Thus we obtain a map  $\beta:~L\lra \P_m$.

\begin{note}
It is clear from the construction that this map $\beta:~L\lra \P_m$ is one-to-one. 
It is also clear that $\beta$ depends on several choices we've made in the construction, 
and in particular it depends on the choice of the prime 
ideals projecting onto the generators of subgroups $G_j$ of $Cl(K)/E$.
\end{note}

\begin{example} Let $m=23$, then $Cl(K)\cong C_3$ and we have 
$$
L=\{2, 3, 13, 29, 31, 41, 47, 59, 71, 73, 101, 127, 131, 139, 151, 163, 167, 173, 179, 193, 197, \ldots\}.
$$
Here $L_0 = \{59, 101, 167, 173,\ldots \}\subset L$ with the corresponding values
$$
\beta(59) =[13, 12, 59], ~\beta(101)=[83, 12, 101],~ \beta(167)=[121, 24, 167],~\beta(173)= [11, 36, 173], \ldots
$$ 
If we choose the ideal $\left\langle 2, \frac{1+\sqrt{-23}}{2}\right \rangle$ lying over 2 as the one, which gives 
the generator $P_1$ of $Cl(K)/E=C_3$, then $\beta(2) = [7,3,2\cdot 2^3]$, and also
$$
\beta(3)=[11, 1, 2\cdot(3\cdot2)],~\beta(13)=[29, 9, 2\cdot(13\cdot2)],~\beta(29)=[91, 15, 2\cdot(29\cdot2)], ~
\ldots~\mbox{and so on}~\ldots
$$
If we choose the ideal $\left\langle 3, 1+\sqrt{-23} \right \rangle$ lying over 3 as the one, which gives 
the generator of $Cl(K)/E=C_3$, then $\beta(3) = [19,4, 3^3]$, and also
$$
\beta(2)=[11, 1, 2\cdot(2\cdot3)],~\beta(13)=[7, 8, (13\cdot3)],~\beta(29)=[41, 16, (29\cdot3)], ~
\ldots~\mbox{and so on}~\ldots
$$
\end{example}

Using this map $\beta$ we can now describe a basis of the 
group $\P_m$ for all $m>3$ when the corresponding ideal 
class group $Cl(K)$ has elements or order higher than 2 
(cf. with Theorem 3 of \cite{Zanardo}).

\begin{thm}
$Im(\beta)\subset \P_m$ forms a basis of the free abelian group $\P_m,~m>3$.
\end{thm}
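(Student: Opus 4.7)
The argument mirrors the two-part proof of Theorem~1: first that $\mathrm{Im}(\beta)$ generates $\P_m$, then that no nontrivial $\inte$-linear relation holds among these generators. The new subtlety is that the third component of $\beta(p)$ is no longer simply $p$ or $2p$; in Case~2 it is $2^{1-\delta}p\cdot p_1^{a_{1p}}\cdots p_n^{a_{np}}$ and in Case~1 it is $2^{1-\delta}p^{h_j}$. The bound $a_{jp}\le\lfloor h_j/2\rfloor$ enforced in the construction of $\beta$ is what makes both the descent and the freeness arguments go through.

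\textbf{Generation.} Given $[a,b,c]\in\P_m$, I would eliminate the odd prime factors of $c$ one at a time, each such prime lying in $L$ by Note~\ref{c=even}. For each such $p$, writing $\beta(p)=[u,v,M_p]$ with $f=v_p(M_p)$, the divisibility identity $(bu+av)(bu-av)\equiv 0\pmod{p^{2f}}$ together with Lemma~2 --- exactly as in the proof of Theorem~1 --- lets me pick a sign so that $[a,b,c]\pm\beta(p)$ reduces to a primitive triple with strictly smaller $p$-adic valuation in its third component. To avoid circular descent I would process all non-generator primes ($p\ne p_1,\ldots,p_n$) first in decreasing order, so that each Case~2 subtraction introduces only factors of the fixed primes $p_1,\ldots,p_n$; only after the support has been reduced to the generator primes do I invoke the Case~1 triples $\beta(p_j)$, which remove $h_j$ powers of $p_j$ at a time. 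The bound $a_{jp}\le\lfloor h_j/2\rfloor$ is what makes the accumulated residue at each $p_j$ come out to a multiple of $h_j$ in this final stage. Once the third component is a pure power of $2$, the argument concludes exactly as in Theorem~1, using the $[q,r,4]$ triples when $-m\equiv 1\pmod 8$.

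\textbf{Freeness and main obstacle.} Suppose $\sum_{j\in J}n_j\beta(q_j)=\sum_{t\in T}m_t\beta(q_t)$ with $J\cap T=\emptyset$. Let $q^*$ be the largest prime in $J\cup T$ that is \emph{not} one of the generator primes $p_1,\ldots,p_n$. Then $q^*$ appears in the third component of $\beta(q^*)$ but of no other $\beta(p)$ present in the relation, so matching $q^*$-adic valuations of the primitive representatives on both sides forces $n_{q^*}=0$ (or $m_{q^*}=0$), eliminating $q^*$ from the support. Iterating reduces the support of the relation to $\{p_1,\ldots,p_n\}\cup L_0$; the $L_0$ contribution is excluded by Theorem~1. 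What is left is a relation purely among $\beta(p_1),\ldots,\beta(p_n)$, and this is the hardest step: I would translate it into an equality of principal ideals in $\O_K$, project via $\rho\circ\pi$ into $Cl(K)/E$, and extract a relation $\prod_j [\pi(P_j)]^{n_jh_j}\in E$, which must be trivial in $Cl(K)/E$ --- contradicting the independence of the chosen generators of this factor group. The main effort lies precisely in carrying out this descent cleanly, tracking the $2$-torsion ambiguity encoded by $E$ itself and the factor $2^{1-\delta}$ appearing in the third components of the $\beta(p_j)$'s.
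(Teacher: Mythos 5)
Your overall architecture (generation by descent on the prime support of the third component, then freeness via $p$-adic valuations of third components) matches the paper, and your elimination of the non-generator primes from a putative relation is essentially the paper's argument via Lemma 2. But there are two genuine gaps. First, in the generation step the crux is why, after all primes outside $\{p_1,\ldots,p_n\}$ have been removed from the third component, the remaining exponent of each $p_j$ is a multiple of $h_j$, so that the Case 1 triples $\beta(p_j)=[u_j,v_j,2^{1-\delta}p_j^{h_j}]$ can finish the descent. Your stated mechanism --- that the bound $a_{jp}\le\lfloor h_j/2\rfloor$ forces the accumulated exponent to be a multiple of $h_j$ --- is not a proof and is not the reason: that bound merely normalizes the representative of $\rho\circ\pi(l(p))$ in $Cl(K)/E$ and has no bearing on divisibility of accumulated exponents; moreover your bookkeeping ignores that passing to the primitive representative after each subtraction can also cancel powers of the $p_j$ already present, so the residue is not simply a sum of the $a_{jp}$. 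This step genuinely requires the class-group computation: the paper factors $\langle a-b\sqrt{-m}\rangle$ into prime ideals of $\O_K$, rewrites each prime factor modulo $E$ in terms of the $P_j$ using the decomposition $Cl(K)/E\cong G_1\times\cdots\times G_n$, and only then reads off that the leftover third component involves $2$, primes of $L_0$, and $h_j$-th powers of the $p_j$. Some version of that argument must appear in your proof.

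Second, your final step in the freeness argument is vacuous. By construction $G_j=\langle\rho\circ\pi(P_j)\rangle$ has order $h_j$, so $[\pi(P_j)]^{h_j}\in E$ for every $j$; hence $\prod_j[\pi(P_j)]^{n_jh_j}\in E$ holds for \emph{all} integers $n_j$ and its image in $Cl(K)/E$ is always trivial, so no contradiction with the independence of the generators can be extracted. The paper instead disposes of the generator primes by the same valuation argument used for the other primes: by Lemma 2 the term $s_1\cdot\beta(p_1)$ contributes $p_1^{s_1h_1}$ to the third component of its side of the relation, while every other $\beta$ occurring there, and the whole other side, has third component coprime to $p_1$; this forces $s_1=0$. (A correct ideal-theoretic alternative would be that the relation forces $\prod_jP_j^{2h_jn_j}$ to be generated by a rational number, which by unique factorization of ideals and $P_j\neq P_j'$ gives $n_j=0$ --- but that is unique factorization, not the projection to $Cl(K)/E$ you propose.) Finally, "the $L_0$ contribution is excluded by Theorem 1" is not a literal citation you can make, since Theorem 1 assumes $L_0=L$; you must repeat its third step, as the paper does.
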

\begin{proof}
First I explain why $Im(\beta)$ is a generating set. Take any primitive $[a,b,c]\in\P_m$ and write the prime decomposition 
in $\O_K$ for the ideal 
$$
\langle a - b\sqrt{-m}\rangle = T_1^{l_1}\cdot T_2^{l_2}\cdot\ldots\cdot T_r^{l_r} = : \T ,~~\mbox{and note that}~2\mid l_j~\forall j,~ \mbox{since}~(a,b)=1.
$$
If $T_1$ is not one of the chosen generators $\{P_1,P_2,\ldots,P_n\}$ for $Cl(K)/E$, then using the construction we used in case 2), 
we can write that 
$T_1\doteq Q_1^{b_{1t_1}}\cdot Q_2^{b_{2t_1}}\cdot\ldots \cdot Q_n^{b_{nt_1}}$, where  $\doteq$ means the equality up to a 
product by an element of $E$, $t_1=N(T_1)$ and each $Q_j\in\{P_j,~P'_j\}$ 
so that $b_{jt_1}\in\{0,1,\ldots,\left[\frac{h_j}{2}\right]\}$. Denote the product 
$T_1'\cdot Q_1^{b_{1t_1}}\cdot Q_2^{b_{2t_1}}\cdot\ldots \cdot Q_n^{b_{nt_1}}$ by $\T_1$ and we obtain 
$$
\T\cdot \T_1^{l_1} \doteq T_2^{l_2}\cdot\ldots\cdot T_r^{l_r}\cdot (Q_1^{b_{1t_1}}\cdot Q_2^{b_{2t_1}}\cdot\ldots \cdot Q_n^{b_{nt_1}})^{l_1}
$$
Similarly we can ``eliminate'' from $\T$ all $T_j$, which are not in the set $\{P_1,P_2,\ldots,P_n\}$. Suppose that on the 
other hand we have $T_1=P_1$ (we can always rename $T_j$ if needed). Then using division with remainder we can write 
$l_1 = q_1\cdot h_1 + r_1$, with $0\leq r_1< h_1$ and hence $(T'_1)^{l_1} = \T^{q_1}_1\cdot Q^{r_1}_1$ where $Q_1 = P'_1$, 
and $\T_1= (P'_1)^{h_1}$, which also would give us 
$$
\T\cdot \T_1^{q_1} \doteq T_2^{l_2}\cdot\ldots\cdot T_r^{l_r}\cdot (Q'_1)^{r_1}.
$$
Therefore if we let $\omega_j=q_j$ when $T_j=P_j$ and $l_j=q_j\cdot h_j + r_j$, and $\omega_j=l_j$ otherwise, we deduce that 
$$
\T\cdot\T_1^{\omega_1}\cdot\ldots\cdot \T_r^{\omega_r} \in E,
$$
since each $\T_j\in E$ and $\T$ is principal. Hence we can write for the conjugate ideal 
$$
\langle a + b\sqrt{-m}\rangle 
= \T_1^{\omega_1}\cdot\ldots\cdot \T_r^{\omega_r} \cdot I,~~\mbox{for some}~~I\in E.
$$
Since each $l_j$ is even, this last equality implies 
$$
[a,b,c] = \gamma_1\beta(t_1) + \gamma_2\beta(t_2) + \ldots + \gamma_r\beta(t_r) + [g,h,d], ~~\mbox{with all}~~\gamma_j\in\inte,
$$
and where $d$ can be divided only by 2, primes from the set $L_0$, and powers of $(p_j)^{h_j}$ for $j\in\{1,2,\ldots,n\}$ (recall that 
by our choice, $Cl(K)/E$ is generated by $\rho\circ\pi(P_j)$ where $\mu(P_j)=p_j$ and $h_j$ is the order of the corresponding 
cyclic subgroup $G_j$). Repeating the second step from the proof of 
Theorem 1 above, and presenting $[g,h,d]$ as a linear combination of elements from $\beta(L_0)$, and from 
$\{\beta(p_1),\ldots \beta(p_n)\}$ we conclude that $\P_m$ is generated by the image $Im(\beta)$.
 
To show that the set $Im(\beta)$ is free of any nontrivial relations I will use the same approach that was used in the third 
part of the proof of Theorem 1. Let's suppose that there is a nontrivial relation among the generators, i.e. there exists a 
primitive triple $(K,L,M)$ s.t. $ [K,L,M]\in\P_m$ and 
\begin{equation}\label{last}
\sum\limits_{i\in I} s_i\cdot\beta(p_i) = [K,L,M] = \sum\limits_{j\in J} t_j\cdot\beta(p_j),~~
\mbox{and}~~I\cap J = \emptyset,
\end{equation}
where $\{p_i~|~i\in I\}\cup \{p_j~|~j\in J\} \subset L$, and $s_i,~t_j \in\inte$ for all $i\in I$ and $j\in J$. 
If we assume that there exists an odd prime $q\in \{p_i~|~i\in I\}$, 
which does not lie below any of the chosen idelas $P_1,\ldots, P_n$ mapping into the 
generators of $Cl(K)/E$, then using our lemma 2, we can write the left hand side of (\ref{last}) as
$$
\sum\limits_{i\in I,~p_i\neq q} s_i\cdot\beta(p_i) + s\cdot \beta(q) =  [K_1,L_1,M_1] + [u,~v,~q^s\cdot w],
$$
where the triples $(K_1,L_1,M_1)$ and $(u,v,q^s\cdot w)$ are primitive. It follows from the definition of operation 
in $\P_m$, fact that $I\cap J=\emptyset$, and our construction of map $\beta$ that $(M_1,q)=1$ and also 
$(M,q)=1$. But 
$$
[K,L,M] = [K_1,L_1,M_1]+ [u,v,q^s\cdot w] = [uK_1 - mvL_1,~vK_1+ uL_1, ~q^s\cdot w\cdot M_1],
$$
which is possible only when $q^s$ divides both $uK_1 - mvL_1$, and $vK_1+ uL_1$. In such case $q^s$ will also divide 
$vK_1^2 + mvL_1^2= v\cdot M_1^2$, which is possible only if $s=0$. This proves that if a nontrivial relation exists 
among elements of $Im(\beta)$, it could only involve elements from $\beta(L_0)$ and from 
$\{\beta(2),\beta(p_1),\ldots, \beta(p_n)\}$.

If we assume that there is a nontrivial relation (\ref{last}), with $\{p_i~|~i\in I\}\cup \{p_j~|~j\in J\} \subset L_0\cup \{p_1,\ldots,p_n\}$, 
and, for example, that odd $p_1\in \{p_i~|~i\in I\}$, then using the definition of map $\beta$ and lemma 2, 
we can rewrite the left hand side of (\ref{last}) this time as 
$$
\sum\limits_{i\in I,~p_i\neq p_1} s_i\cdot\beta(p_i) + s_1\cdot \beta(p_1) =  [K_1,L_1,M_1] + [u,~v,~p_1^{s_1h_1}\cdot w],
$$
where $(K_1,L_1,M_1)$ and $(u,v,p_1^{s_1h_1}\cdot w)$ are primitive triples such that $(M_1,p_1)=1$. Since 
$p_1\notin  \{p_j~|~j\in J\}$, we also have $(M,p_1)=1$. We will come to a contradiction again using  
exactly the same argument we just used above by presenting $[K,L,M]$ as $[*,~*,~ p_1^{s_1h_1}\cdot w\cdot M_1]$.
Hence the elements from $\{\beta(p_1),\ldots, \beta(p_n)\}$ can not be involved in a nontrivial relation either
(with only possible exception of $\beta(2)$). 

Finally, assuming that there is a nontrivial relation among elements of $\{\beta(p)~|~p\in L_0\cup\{2\}\}$ only, one can 
repeat the third part of the proof of Theorem 1 to deduce that the left and right hand sides of (\ref{last}) 
could only have group elements where the third component is a power of 2. But there could be only one such basis element, 
which is $\beta(2)$, and hence we can not have two different nontrivial presentations of $[K,L,M]$. This finishes the proof of 
Theorem 2.
\end{proof}

\noindent Here is an example, which motivates and illustrates the approach I used above in the construction of map $\beta$.

\begin{example} Let $m=974$, then $Cl(K)\cong C_{12}\times C_3$. Chose the ideal $\langle 5, 1+\sqrt{-m}\rangle$ 
as the generator $P_1$ of $C_{12}$ and the ideal $\langle 41, 16 +\sqrt{-m}\rangle$ 
as the generator $P_2$ of $C_3$. Then $\rho\circ\pi(P_1)$ and $\rho\circ\pi(P_2)$ will be the chosen generators 
of the factor group $CL(K)/E\cong C_6\times C_3$. Using a computer, one can easily find that subset $L_0$ starts 
with numbers $L_0=\{937,~983,~\ldots\}$, and correspondingly  
$\beta(937)=[37, 30, 937]$ and $\beta(983) = [965, 6, 983]$. On the other hand, the set $L$ contains many smaller primes as well: 
$$
L=\{3, 5, 11, 13, 31, 37, 41, 43, 59, 71, 73, 89, 97, 101, 103, 109, 127, 131, 137, 149, 163, \ldots\}.
$$
For the ideal $P_2$ we have $P^6_2 = \langle 61129 - 1020\sqrt{-974}\rangle$ and $\beta(41) = [61129, 1020,41^3]$ 
with $61129^2+974\cdot 1020^2 = (41^3)^2$. For the ideal $P_1$ we have 
$P^{12}_1 = \langle 14651 -174\sqrt{-974} \rangle$ and $\beta(5) = [14651, 174,5^6]$. 

Now let's chose a prime which is not in $L_0\cup\{\mu(P_1),\mu(P_2)\}$, for example $p=3$. 
Then we have the product $P\cdot P_1\cdot P_2\in E$ since 
$$
\langle 3, 1+\sqrt{-974}\rangle\cdot\langle 5, 1+\sqrt{-m}\rangle \cdot \langle 41, 16 +\sqrt{-m}\rangle = 
\langle 615, 16 + \sqrt{-974} \rangle, 
$$
and $\langle 615, 16 + \sqrt{-974} \rangle^2 = \langle 359 -16\sqrt{-974} \rangle$. Hence 
$\beta(3)=[359, 16, 3\cdot 5\cdot 41]$. 

\noindent Notice also that we have two primitive triples for $p=37$ 
where the third components are the same:
$$
[4141,66,37\cdot 5^3]~~\mbox{with}~~(\langle 37, 5+\sqrt{-974}\rangle\cdot P_1^3)^2 = 
\langle 4141 + 66\sqrt{-974}\rangle,
$$
and 
$$
[3167, 108, 37\cdot 5^3]~~\mbox{with}~~(\langle 37, 32+\sqrt{-974}\rangle\cdot P_1^3)^2 
= \langle 3167 - 108\sqrt{-974}\rangle.
$$
Since 
$$
\langle 4141 + 66\sqrt{-974}\rangle\cdot  \langle 14651 +174\sqrt{-974} \rangle =  
\langle 3167 + 108\sqrt{-974}\rangle\cdot \langle 5^6\rangle
$$
we have 
$$
[4141,~66,~37\cdot 5^3] + [14651,~174,~5^6] = [3167,~108, ~37\cdot 5^3],
$$
and hence according to our definition we have $\beta(37) = [3167,~108, ~37\cdot 5^3]$.
\end{example}


\begin{thebibliography}{99}

%\bibitem{Alaca} Alaca, S.; Williams, K.; {\it ``
%Introductory algebraic number theory"} Cambridge University Press,
%Cambridge, 2004.

\bibitem{Baldisserri} Baldisserri, N.; {\it ``The group of primitive quasi-Pythagorean triples"}
Rend. Circ. Mat. Palermo (2) 48 (1999), no. 2, 299 - 308.

\bibitem{Cox} Cox, D.; {\it ``Primes of the form $x^2 + ny^2$"} 
Fermat, class field theory and complex multiplication.
A Wiley-Interscience Publication. John Wiley \& Sons, Inc., New
York, 1989.

\bibitem{Eckert} Eckert, E.; {\it ``The group of primitive
Pythagorean triangles"} Math. Mag. 57 (1984), no. 1, 22 - 27.

\bibitem{Kulzer} Krylov, N.; Kulzer, L.; {\it ``The group of primitive almost Pythagorean triples"} 
Involve 6 (2013), no. 1, 13 - 24.

\bibitem{Lemmermeyer} Lemmermeyer, F.: {\it ``Higher Descent on Pell Conics III.
The First 2-Descent"}, preprint 2003

\bibitem{Marcus} Marcus, D.; {\it ``Number Fields"}, Universitext. Springer-Verlag, New York-Heidelberg, 1977.

%\bibitem{Schenkman} Schenkman, E.: {\it ``On the multiplicative group of a field"}. Arch. Math.
%(Basel) 15 (1964), 282 - 285.

\bibitem{Taussky} Taussky, O.; {\it ``Sums of squares"}, Amer. Math. Monthly 77 (1970), 805 - 830.

\bibitem{Zanardo} Zanardo, P.; Zannier, U.; {\it ``The group of
Pyhtagorean triples in number fields"} Ann. Mat. Pura Appl. (4) 159
(1991), 81 - 88.

\end{thebibliography}
\end{document}